\documentclass[graybox]{svmult}

\usepackage{mathptmx}       
\usepackage{helvet}         
\usepackage{courier}        
%
\usepackage{graphicx}        
\usepackage{multicol}        
\usepackage[bottom]{footmisc}

\usepackage{cite}
\usepackage{fancyvrb}        

\usepackage{tikz}
\usepackage{graphics}
\usepackage{float}

\usepackage{subcaption}
\captionsetup{compatibility=false}

\usepackage{pgfplots}



\usepackage{amsmath}
\newcommand{\dt}{\partial_t\,}
\newcommand{\dtau}{d_\tau\,}
\newcommand{\mytop}{\mathsf{T}}
\newcommand{\bb}{\mathbf{b}}
\newcommand{\hh}{\mathbf{h}}
\newcommand{\ee}{\mathbf{e}}
\newcommand{\dd}{\mathbf{d}}
\newcommand{\pp}{\mathbf{p}}
\newcommand{\eps}{\epsilon}
\newcommand{\jj}{j}
\newcommand{\Mh}{\mathsf{M_h}}
\newcommand{\Me}{\mathsf{M_e}}
\newcommand{\Mpi}{\mathsf{M_{p,i}}}
\newcommand{\Mdi}{\mathsf{M_{d,i}}}
\newcommand{\C}{\mathsf{C}}
\newcommand{\M}{\mathsf{M}}

\newcommand{\curl}{\operatorname{curl}}

\begin{document}

\title*{A convolution quadrature method for Maxwell's equations in dispersive media}
\author{J{\"u}rgen D{\"o}lz 
\and
Herbert Egger
\and
Vsevolod Shashkov 
}
\institute{J{\"u}rgen D{\"o}lz \at University of Twente,
  \email{j.dolz@utwente.nl}
\and
Herbert Egger, Vsevolod Shashkov \at TU Darmstadt, 
\email{herbert.egger@tu-darmstadt.de, shashkov@mathematik.tu-darmstadt.de}}

\maketitle

\abstract*{We study the systematic numerical approximation of Maxwell's equations in dispersive media. Two discretization strategies are considered, one based on a traditional leapfrog time integration method and the other based on convolution quadrature. The two schemes are proven to be equivalent and to preserve the underlying energy-dissipation structure of the problem. The second approach, however, is independent of the number of internal states and allows to handle rather general dispersive materials. Using ideas of fast-and-oblivious convolution quadrature, the method can be implemented efficiently.}

\abstract{We study the systematic numerical approximation of Maxwell's equations in dispersive media. Two discretization strategies are considered, one based on a traditional leapfrog time integration method and the other based on convolution quadrature. The two schemes are proven to be equivalent and to preserve the underlying energy-dissipation structure of the problem. The second approach, however, is independent of the number of internal states and allows to handle rather general dispersive materials. Using ideas of fast-and-oblivious convolution quadrature, the method can be implemented efficiently.\\}

\section{Introduction} \label{sec:1}

We consider electromagnetic wave propagation through linear dispersive media.  The underlying physics are described by Maxwell's equations
\begin{align} \label{eq:max1}
\dt \dd &= \curl \hh, \qquad \dt \bb = -\curl \ee 
\end{align}
with $\ee,\hh$ and $\dd,\bb$ denoting the electric and magnetic fields and fluxes, respectively, which are mutually related by the constitutive relations
\begin{align} \label{eq:max2}
\bb = \mu_0 \hh, \qquad \dd = \eps_0 \eps_\infty \ee + \pp. 
\end{align}
Here $\epsilon_0, \mu_0$ are the permittivity and permeability of vacuum, and $\eps_\infty=1+\eps_\infty'$ is the high frequency limit of the relative permittivity.
Further, $\pp$ denotes the memory part of the polarization $\pp_{tot}=\eps_0 \eps_\infty' \ee + \pp$, 
which is described in frequency domain by 
\begin{align} \label{eq:max3}
  \hat \pp(s) = \eps_0 \hat \chi(s)\hat \ee(s).
\end{align}
The system is complemented by appropriate boundary and initial conditions. For ease of presentation, we assume that $\ee(0)=\pp(0)=0$ in the following.
By inverse Laplace-transform, the polarization can then be expressed in time domain by 
\begin{align} \label{eq:max4}
\pp(t) = \eps_0 \int_0^t \chi(t-s) \ee(s) ds.
\end{align}
We further assume throughout the paper that the susceptibility kernel $\chi$ can be written as a superposition of simple Debye functions \cite{Debye29}, i.e., 
\begin{equation} \label{eq:debye}    
\hat \chi(s) = \sum\nolimits_{i} \hat \chi_i(s) 
\qquad \text{with} \qquad 
\hat\chi_i(s) = \frac{\epsilon_{i,s}-\epsilon_{i,\infty}'}{1 + s \tau_i},
\end{equation}
where $\tau_i$ denotes the relaxation time and $\eps_{i,s}$, $\eps_{i,\infty}'$ are the static and high-frequency limits of the electric susceptibility of the $i$th component with $\sum_i \eps_{i,\infty}' = \eps_\infty'$. 
Such multipole Debye models have been used, e.g., for the modeling of the dielectric response of biological tissue; see \cite{Clegg12,Gabriel96} and the references given there. 
In general, the summation in \eqref{eq:debye} may be over infinitely many terms. %

One of the key features of the multipole Debye model is its provable passivity, which follows from the energy--dissipation principle \cite{Bokil14,Lanteri12} 
\begin{equation}
\frac{d}{dt}\mathcal{E} = -\sum\nolimits_i \| \sqrt{\tfrac{\tau_i}{\epsilon_0 (\epsilon_{i,s}- \epsilon_{i,\infty}')}} \dt\pp_i\|^2, 
\end{equation}
valid for any sufficiently smooth solution of \eqref{eq:max1}--\eqref{eq:max3} with homogeneous or periodic boundary conditions. Here $\|\cdot\|$ is the $L^2$-norm, further $\pp = \sum_i \pp_i$ is the decomposition of the memory part of the polarization into its components according to \eqref{eq:debye}, and  
\begin{equation}
\label{shashkov:eq:energy}
\mathcal{E} = \frac{1}{2}\left( \| \sqrt{\mu_0}\hh \|^2 + \|\sqrt{\epsilon_0\epsilon_\infty} \ee \|^2 + \sum\nolimits_i \| \tfrac{1}{\sqrt{\epsilon_0(\epsilon_i-\epsilon_\infty')}}\pp_i \|^2 \right)
\end{equation}
denotes the electromagnetic energy of the system. 
Due to the rational structure of the transfer functions $\hat\chi_i$, the individual polarizations $\pp_i$ can be characterized equivalently by the differential equations 
\begin{align} \label{eq:debye_ode}
\tau_i \dt \pp_i + \pp_i = \eps_0 (\epsilon_{i,s}-\epsilon_{i,\infty}') \ee,
\end{align}
with initial values $\pp_i(0) =0$, which is the basis for various simulation methods. 
Corresponding finite difference and finite element schemes have been considered, for instance, in \cite{Bokil14,Gandhi93,Jenkinson18,Jiao01,Lanteri12,Li06,Luebbers90,Shaw10}. 
Let us note that with increasing number of internal states $\pp_i$, all methods become computationally more and more expensive.  

In this paper, we consider a different approach for the numerical solution of \eqref{eq:max1}--\eqref{eq:max3}, which allows to us compute the time evolution of $\ee$, $\hh$, and $\pp$ without explicitly computing the internal states $\pp_i$. 
As indicated in \cite{Egger20}, this can be accomplished through discretization of the integral \eqref{eq:max4} by means of appropriate convolution quadratures \cite{Lubich88a,Lubich93}, instead of integrating \eqref{eq:debye_ode} with time-differencing schemes. 
The complexity of every time step is then independent of the number of internal states $\pp_i$. Moreover, using ideas of \cite{Roychowdhury99,Schaedle05}, the additional memory cost for storing the history of the field $\ee$ can be reduced to the logarithm of the number of time steps. 

The remainder of the manuscript is organized as follows: 
In Section~\ref{sec:2}, we briefly discuss the discretization of \eqref{eq:max1}--\eqref{eq:max2} together with \eqref{eq:debye_ode} by means of standard methods, and we present a short proof of the underlying energy-dissipation structure, which results in passivity of the discrete scheme. %
In Section~\ref{sec:3}, we then introduce our alternative approach based on convolution quadrature, and we prove its equivalence with the method discussed in Section~\ref{sec:2}. As a consequence, the favorable stability properties of standard schemes are automatically inherited. 
In~Section~\ref{sec:4}, we present computational results for the propagation of an electromagnetic pulse across the interface between air and human tissue and we illustrate the energy--dissipation behavior as well as the equivalence of the two schemes discussed in the paper. 

\vspace*{-1em}

\section{Structure preserving discretization} \label{sec:2}

After space discretization by appropriate finite-difference or finite-element methods and time-discretization by the leapfrog scheme, the system \eqref{eq:max1}--\eqref{eq:max2} with polarization components defined by \eqref{eq:debye_ode} can be written in
matrix--vector notation as 
\begin{alignat}{5}
\Mh \, \dtau \hh^{n} + \C \, \ee^{n} &= 0, \label{eq:disc1} \\
\Me \, \dtau \ee^{n+1/2} + \sum\nolimits_i \dtau \pp_i^{n+1/2} - \C^\mytop \, \overline \hh^{n} &= 0, \label{eq:disc2}\\
\Mdi \, \dtau \pp_i^{n+1/2} + \Mpi \, \overline \pp_i^{n+1/2} &= \overline \ee^{n+1/2}, \qquad i \ge 1. \label{eq:disc3} 
\end{alignat}
The equations hold for all $n \ge 0$ and are complemented by appropriate initial conditions.
Here $u^n$ and $u^{n+1/2}$ are the approximations for $u(t^n)$ and $u(t^{n+1/2})$ with $t^s=s \tau$ and $\tau$ denoting the time step size. 
Furthermore, $\dtau u^n = \frac{1}{\tau} (u^{n+1/2} - u^{n-1/2})$ is the central difference quotient, and $\overline u^n = \frac{1}{2}(u^{n+1/2}+u^{n-1/2})$ the average of two steps.
Note that equation~\eqref{eq:disc3} was obtained from \eqref{eq:debye_ode} after dividing by $\eps_0 (\eps_{i,s} - \eps_{i,\infty}')$. 

For appropriate space discretization schemes, the mass matrices $\M_*$ are symmetric, positive-definite, and diagonal or block-diagonal \cite{Cohen02,Egger18}, such that \eqref{eq:disc1}--\eqref{eq:disc3} amounts to an explicit time-stepping scheme.
Moreover, the method satisfies the following discrete equivalent of the underlying energy--dissipation identity.
\begin{lemma} \label{lem:1}
Set $\|a\|_\M^2=(a,a)_\M$ and $(a,b)_\M=b^\mytop \M a$, and denote by
\begin{align*}
\mathcal{E}^{n} = \frac{1}{2} \left( (\hh^{n+1/2},\hh^{n-1/2})_{\Mh} + \|\ee^n\|^2_{\Me} + \sum\nolimits_i \|\pp_i^n\|^2_{\Mpi} \right)
\end{align*}
the discrete energy at time step $t^n=n\tau$. 
Then any solution of \eqref{eq:disc1}--\eqref{eq:disc3} satisfies
\begin{align*}
\dtau \mathcal{E}^{n+1/2} = - \sum\nolimits_i \| \dtau \pp_i^{n+1/2} \|^2_{\Mdi}, \qquad n \ge 0.
\end{align*}
\end{lemma}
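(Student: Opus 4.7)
The plan is to compute $\dtau\mathcal{E}^{n+1/2}=(\mathcal{E}^{n+1}-\mathcal{E}^n)/\tau$ term by term and reduce the outcome to the claimed dissipation by invoking \eqref{eq:disc1}--\eqref{eq:disc3} in sequence, mimicking the continuous energy balance \eqref{shashkov:eq:energy}. The calculation splits naturally into three contributions, one per summand of $\mathcal{E}^n$, whose cross-terms will have to cancel.

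For the magnetic part, symmetry of $\Mh$ rewrites $(\hh^{n+3/2},\hh^{n+1/2})_\Mh-(\hh^{n+1/2},\hh^{n-1/2})_\Mh$ as $(\hh^{n+1/2})^\mytop\Mh(\hh^{n+3/2}-\hh^{n-1/2})$. Applying \eqref{eq:disc1} at the two consecutive integer times $n$ and $n+1$ and adding yields $\Mh(\hh^{n+3/2}-\hh^{n-1/2})=-2\tau\,\C\overline\ee^{n+1/2}$, so after combining with the factor $\tfrac{1}{2}$ from $\mathcal{E}^n$ and dividing by $\tau$, the magnetic contribution to $\dtau\mathcal{E}^{n+1/2}$ equals $-(\hh^{n+1/2},\C\overline\ee^{n+1/2})$. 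For the electric and polarization summands, the elementary identity $\|a\|_\M^2-\|b\|_\M^2=(a+b,a-b)_\M$ yields contributions $(\dtau\ee^{n+1/2},\overline\ee^{n+1/2})_\Me$ and $\sum_i(\dtau\pp_i^{n+1/2},\overline\pp_i^{n+1/2})_{\Mpi}$, respectively.

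Testing the electric update \eqref{eq:disc2} against $\overline\ee^{n+1/2}$ and using the transposition $(\C^\mytop v,w)=(v,\C w)$ then produces an expression $(\hh^{n+1/2},\C\overline\ee^{n+1/2})$ that exactly cancels the magnetic cross-term. What remains collapses, by symmetry of $\Mpi$, into $\sum_i\bigl(\dtau\pp_i^{n+1/2},\,\Mpi\overline\pp_i^{n+1/2}-\overline\ee^{n+1/2}\bigr)$. Finally, inserting \eqref{eq:disc3} in the rearranged form $\Mpi\overline\pp_i^{n+1/2}-\overline\ee^{n+1/2}=-\Mdi\dtau\pp_i^{n+1/2}$ converts the expression into $-\sum_i\|\dtau\pp_i^{n+1/2}\|^2_{\Mdi}$, the claimed identity.

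The main---and essentially only---obstacle is careful bookkeeping of the staggered time levels: $\hh$ lives at half-integer steps while $\ee,\pp_i$ live at integer steps, and \eqref{eq:disc1} must be invoked at \emph{both} consecutive integer times $n$ and $n+1$ so that the magnetic cross-term comes out in precisely the form $(\hh^{n+1/2},\C\overline\ee^{n+1/2})$ that the test of \eqref{eq:disc2} with $\overline\ee^{n+1/2}$ removes. Once this alignment is in place, the remaining manipulations are elementary algebra in the mass-matrix inner products, using only symmetry of the $\M_*$ and the transpose relation between the space-discrete curl operators.
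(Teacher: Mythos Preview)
Your proof is correct and follows essentially the same route as the paper: expand $\dtau\mathcal{E}^{n+1/2}$ into the magnetic, electric, and polarization contributions, use the average of \eqref{eq:disc1} at times $n$ and $n+1$ tested against $\hh^{n+1/2}$, test \eqref{eq:disc2} against $\overline\ee^{n+1/2}$ and \eqref{eq:disc3} against $\dtau\pp_i^{n+1/2}$, and observe that the curl cross-terms cancel via $(\C a,b)=(\C^\mytop b,a)$. Your write-up is in fact somewhat more explicit about the staggered bookkeeping than the paper's proof, but the underlying argument is the same.
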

\begin{proof}
By elementary computations, one can verify that 
\begin{align*}
\dtau \mathcal{E}^{n+1/2} 
&= \frac{1}{2}(\dtau \hh^{n+1} + \dtau \hh^{n},\hh^{n+1/2})_{\Mh} 
 + (\dtau \ee^{n+1/2},\overline \ee^{n+1/2})_{\Me}\\
& \qquad \qquad \quad \   + \sum\nolimits_i (\dtau \pp_i^{n+1/2}, \overline \pp_i^{n+1/2})_{\Mpi}. 
\end{align*}
Note that $(a,b)_M = (M a,b) = (M b,a)$ where $(\cdot,\cdot)$ denotes the Euclidean scalar product. We then test equation \eqref{eq:disc2} with $\overline\ee^{n+1/2}$ and \eqref{eq:disc3} with $\dtau \pp^{n+1/2}$. Moreover, we test the average of equation \eqref{eq:disc1} for step $n$ and $n+1$ with $\hh^{n+1/2}$. This allows to replace all terms on the right hand side of the above formula and leads to 
\begin{align*}
\dtau \mathcal{E}^{n+1/2} 
&= -(\C \overline \ee^{n+1/2}, \hh^{n+1/2}) + (\C^\mytop \hh^{n+1/2} - \sum\nolimits_i \dtau \pp_i^{n+1/2},\overline \ee^{n+1/2}) \\
& \qquad \qquad \quad  + \sum\nolimits_i (\overline \ee^{n+1/2} - \Mdi \dtau \pp_i^{n+1/2}, \dtau \pp_i^{n+1/2}). 
\end{align*}
Using that $(\C a,b)=(\C^\mytop b,a)$, one can see that most of the terms drop out 
and we obtain the assertion of the lemma. \hfill $ $ \qed
\end{proof}
\begin{remark}
Method \eqref{eq:disc1}--\eqref{eq:disc3} automatically inherits the energy-dissipation principle of the continuous problem. We therefore call it a \emph{structure-preserving} discretization scheme.
The first term in the energy $\mathcal{E}$ can be estimated from below by 
\begin{align*}
(\hh^{k+1/2},\hh^{k-1/2})_{\Mh} 
&= \|\hh^{k+1/2}\|^2_{\Mh} + \tau (\hh^{k+1/2},\dtau \hh^{k})_{\Mh} \\ 
&= \|\hh^{k+1/2}\|^2_{\Mh} - \tau (\C \ee^k,\hh^{k}) \\
&\ge \frac{1}{2} \|\hh^{k+1/2}\|^2_{\Mh} - \frac{\tau^2}{2} \|\C \ee^k\|^2_{\Mh^{-1}},
\end{align*}
and the last term can be further bounded from below 
under the assumption that 
\begin{align} \label{eq:cfl}
\tau^2 \|\C \ee\|^2_{\Mh^{-1}} \le \|\ee\|_{\Me}^2.
\end{align}
This standard CFL condition for the leapfrog method implies stability of the scheme and allows to show that the energy $\mathcal{E}$ is a positive and symmetric quadratic functional and thus induces a norm on the space of state vectors $(\hh,\ee,\pp_1,\pp_2,\ldots)$. 
Together with Lemma~\ref{lem:1}, this is the basis for the error analysis of method \eqref{eq:disc1}--\eqref{eq:disc3}; we refer to \cite{Joly03} for details. 
\end{remark}

\section{A convolution quadrature approach} \label{sec:3}

The dimension of the state space and hence also the computational cost for computing one time step of method \eqref{eq:disc1}--\eqref{eq:disc3} obviously increases with increasing number of internal states $\pp_i$. We will now show that $\ee$, $\hh$, and $\pp=\sum_i \pp_i$ can be computed without explicit reference to the internal states $\pp_i$, which results in an algorithm that is \emph{independent of the number of internal states}.
Instead of using equation \eqref{eq:debye_ode}, we directly discretize the integral \eqref{eq:max4} by a convolution sum
\begin{align} \label{eq:convsum}
\pp^n = \sum\nolimits_{k=0}^n \omega_{n-k}\ee^k.
\end{align}
This is the field of convolution quadrature, and we refer to \cite{Lubich88a,Lubich93} for details on the mathematical background.
As illustrated in \cite{Egger20}, a proper choice of the convolution weights $\{\omega_n\}_{n\ge 0}$ allows to obtain the following equivalence statement. 
\begin{lemma} \label{lem:2}
Let $\{\omega_n\}_{n \ge 0}$ be the coefficients of the power series 
\begin{align}\label{eq:weights}
\eps_0 \hat\chi \left( \tfrac{2(1-\xi)}{ \tau(1+\xi)}\right) = \sum_{n=0}^\infty\omega_n\xi^n.
\end{align}
Then the solution $\{\hh^{n+1/2},\ee^n, \pp^n\}_{n \ge 0}$ of the scheme
\eqref{eq:disc1}--\eqref{eq:disc3} with $\ee^0=\pp_i^0=0$ 
coincides with the solution of the convolution-quadrature method \eqref{eq:disc1}--\eqref{eq:disc2} and \eqref{eq:convsum}. 
\end{lemma}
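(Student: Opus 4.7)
The plan is to establish the generating-function identity $P(\xi) = \eps_0\,\hat\chi(\delta(\xi)/\tau)\,E(\xi)$ with $\delta(\xi) = 2(1-\xi)/(1+\xi)$, where $P(\xi) = \sum_{n\ge 0}\pp^n\xi^n$, $E(\xi)=\sum_{n\ge 0}\ee^n\xi^n$, and $\pp^n = \sum_i \pp_i^n$. Once this identity is in hand, comparing with \eqref{eq:weights} and reading off the coefficient of $\xi^n$ reproduces \eqref{eq:convsum} exactly. Since \eqref{eq:disc1}--\eqref{eq:disc2} couple to the polarization only through $\sum_i \dtau\pp_i^{n+1/2} = \dtau\pp^{n+1/2}$, an induction on $n$ then shows that both schemes generate the same triple $(\hh^{n+1/2},\ee^n,\pp^n)$.

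The key computation is pole-by-pole. Writing $P_i(\xi) = \sum_{n\ge 0}\pp_i^n\xi^n$, I would multiply \eqref{eq:disc3} by $\xi^n$ and sum over $n \ge 0$, using the zero initial data $\pp_i^0=\ee^0=0$ together with the shift identity $\sum_{n\ge 0}\pp_i^{n+1}\xi^n = P_i(\xi)/\xi$ to rewrite the sums over $\dtau\pp_i^{n+1/2}$, $\overline\pp_i^{n+1/2}$, and $\overline\ee^{n+1/2}$ in closed form. After multiplying by $2\xi/(1+\xi)$ the equation simplifies to $(1 + \tau_i\,\delta(\xi)/\tau)\,P_i(\xi) = \eps_0(\eps_{i,s}-\eps_{i,\infty}')\,E(\xi)$, which rearranges to $P_i(\xi) = \eps_0\,\hat\chi_i(\delta(\xi)/\tau)\,E(\xi)$; this is the elementary pole version of Lubich's substitution $s \leftrightarrow \delta(\xi)/\tau$. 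Summing over $i$ and invoking the Debye decomposition \eqref{eq:debye} yields the desired identity for $P(\xi)$.

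The main obstacle is that both schemes are implicit in $\ee^{n+1}$ (via $\pp_i^{n+1}$ in \eqref{eq:disc3} on one side and the $k=n+1$ term $\omega_0\,\ee^{n+1}$ in \eqref{eq:convsum} on the other), so the induction step requires the two implicit updates to produce the same linear system. Evaluating \eqref{eq:weights} at $\xi=0$ gives $\omega_0 = \eps_0\,\hat\chi(2/\tau)$; conversely, solving \eqref{eq:disc3} for $\pp_i^{n+1}$ shows that the coefficient of $\ee^{n+1}$ in $\pp_i^{n+1}-\pp_i^n$ is $\eps_0(\eps_{i,s}-\eps_{i,\infty}')/(1+2\tau_i/\tau) = \eps_0\,\hat\chi_i(2/\tau)$, whose sum over $i$ is precisely $\omega_0$. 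The history contributions agree by the generating-function identity already established, so the two implicit systems coincide and share the same unique solution. If the Debye sum in \eqref{eq:debye} is infinite, all identities are to be read in the ring of formal power series in $\xi$ over the discrete state space, which justifies the interchange of summation in $i$ and in $n$ without any additional hypothesis.
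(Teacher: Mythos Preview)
Your proposal is correct and follows essentially the same route as the paper: multiply \eqref{eq:disc3} by $\xi^n$, sum over $n\ge 0$ using the zero initial data to obtain the pole-by-pole generating-function identity $P_i(\xi)=\eps_0\,\hat\chi_i(\delta(\xi)/\tau)\,E(\xi)$, then sum over $i$ and compare coefficients with \eqref{eq:weights}. Your explicit treatment of the implicit coupling through $\omega_0$ and the step-by-step induction is more detailed than the paper's proof, which leaves this uniqueness aspect implicit, but the underlying argument is the same.
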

\begin{proof}
For convenience of the reader, we briefly summarize the basic ideas of the proof, which closely follows the arguments presented in \cite{Egger20}. 
We start by multiplying equations \eqref{eq:disc3} with $\xi^n$ 
and sum over all $n \ge 0$ to obtain  
\begin{align*}
\sum\nolimits_{n \ge 0} \Mdi \, (\tfrac{1}{\xi} -1)  \pp_i^n \xi^n 
+ \sum\nolimits_{n \ge 0} \Mpi \, (\tfrac{1}{2\xi}+\tfrac{1}{2}) \pp_i^n \xi^n 
&= \sum\nolimits_{n \ge 0} (\tfrac{1}{2\xi}+\tfrac{1}{2}) \ee^n \xi^n. 
\end{align*}
An appropriate rearrangement of terms then further leads to 
\begin{align*}
\sum\nolimits_{n \ge 0} \pp_i^n \xi^n  = \hat\chi_i \left(\tfrac{2(1-\xi)}{\tau(1+\xi)}\right)\sum\nolimits_{n\ge0} \ee^n \xi^n,
\end{align*}
with transfer function $\hat \chi_i$ as defined in \eqref{eq:debye}.    
Summation over all $i$ and using $\pp^n=\sum_i \pp^n_i$ and the definition of the weights $\omega_n$ then yields the assertion. \hfill $ $ \qed
\end{proof}
\begin{remark} \label{rem:cq}
According to the above lemma, the convolution quadrature (CQ) method defined by \eqref{eq:disc1}--\eqref{eq:disc2} and \eqref{eq:convsum}--\eqref{eq:weights} has the same passivity and stability properties as the underlying difference scheme \eqref{eq:disc1}--\eqref{eq:disc3}. 
Let us note that instead of the internal states $\{\pp_i^n\}_{i \ge 0}$, the CQ approach utilizes the history $\{\ee^k\}_{k \le n}$ of the electric field values to compute the memory part $\pp^n$ of the polarization. 
\end{remark}

Before closing this section, we briefly comment on the practical computation of the weights $\{\omega_n\}_{n \ge 0}$ and the efficient realization of the proposed CQ approach. 

\begin{remark} \label{rem:weights}
Following \cite{Lubich88a,Lubich88b}, also see \cite{Egger20}, the convolution weights $\{\omega_n\}_{n\ge0}$ can be computed with high accuracy using fast Fourier transforms, i.e., 
\begin{align*} 
\omega_n \approx \tfrac{1}{L\rho^n} \sum\nolimits_{\ell=0}^{L-1} \hat \chi \left(\tfrac{2}{\tau} \tfrac{1-\rho e^{\imag\phi_\ell}}{1+\rho e^{\imag\phi_\ell}}\right) e^{-\imag n\phi_\ell}, \qquad \phi_\ell = 2\pi \ell/L,
\end{align*}
and the quadrature error can be controlled by appropriate choice of the parameters $L$ and $\rho$; see \cite{Lubich88a,Lubich88b,Lubich93} for details. 
The computation of all weights $\{\omega_{n}\}_{n=0}^N$ with machine precision requires $O(N)$ evaluations of $\hat \chi$.
If the material parameters are inhomogeneous, then the weights $\omega_n$ will also depend on the spatial variable. 
\end{remark}

\begin{remark} \label{rem:focq}
A straight-forward implementation of the CQ approach would require the storage of the complete history $\{\ee^k\}_{k \le n}$ to compute the polarization $p^n$ via \eqref{eq:convsum}. 
By ideas of \cite{Roychowdhury99,Schaedle05}, the required storage can be reduced to $O(\log N)$ field vectors, leading to a \emph{fast-and-oblivious} convolution quadrature (FOCQ) method. 
The basic idea is to divide the convolution sum \eqref{eq:convsum} 
into exponentially growing subsums
\begin{align*}
\sum\nolimits_{k=0}^n \omega_{k}\ee^{n-k} = \sum\nolimits_{\ell=0}^{L} \sum\nolimits_{k = B^{\ell-1}-1}^{B^\ell} \omega_{k}\ee^{n-k} =:\sum\nolimits_{\ell=0}^{L} U_n^\ell,
\end{align*}
where $B>1$ is an integer and we assumed for simplicity that $n=B^L$ is a power of the basis $B$; otherwise the first few summands are taken into account separately. 
Under certain regularity assumptions on $\hat \chi$, each subsum $U_n^\ell$ can be approximated efficiently using interpolation \cite{Roychowdhury99} or contour integration techniques \cite{Schaedle05}.
The FOCQ algorithm thus requires only to store $O(\log N)$ historical field vectors and further only $O(\log N)$ evaluations of the transfer function $\hat\chi$ are needed.
\end{remark}

\section{Numerical illustration} \label{sec:4}

In our test problem, we consider the propagation of an electromagnetic pulse across the interface between air and human tissue. The dielectric response of the tissue 
is characterized by a five-pole Debye model which was taken from \cite{Gandhi93}. 
Using the notation of Section~\ref{sec:1}, the total polarization in this model is prescribed in frequency domain by 
$\hat \pp_{tot}(s)=\eps_0 (\eps_\infty' + \hat\chi(s)) \hat \ee(s)$ with $\eps_\infty'=3.3$ and 
\begin{align*}
\hat\chi(\jj\omega) 
= \frac{8.5\cdot 10^5}{1+ \jj\omega/(138\pi)}
&+\frac{8.19 \cdot 10^3}{1+ \jj\omega/(86\pi\cdot 10^3)} 
+\frac{1.19 \cdot 10^3}{1+ \jj\omega/(1.34\pi\cdot10^6)}  \\
& +\frac{32}{1+ \jj\omega/(460\pi\cdot 10^6)}
+\frac{45.8}{1+ \jj\omega/(40\pi\cdot 10^9)}.  
\end{align*}
For our computational tests, we consider a plane wave setting, in which the fields are of the form $\ee = (e_x,0,0)$, $\hh=(0,h_y,0)$, and $\pp_i = (p_{x,i},0,0)$, and only depend time $t$ and the propagation direction $z$. Then \eqref{eq:max1}--\eqref{eq:max4} leads to a one--dimensional wave propagation problem for unknown fields $e_x$, $p_x$ and $h_y$.  
As computational domain, we consider the interval $(-1,1)$ and we impose periodic boundary conditions for the electric and magnetic field. 
The initial values are described by $e_{x,0}(z) = p_{x,i,0}(z)= 0$ and $h_{y,0}(z)=10 e^{-10 z^2}$.
All quantities are given in SI-units.

For the spatial discretization, we utilize  piecewise linear finite elements for $e_x$ and $p_{x,i}$, and piecewise constants to represent $h_y$. Numerical integration by the vertex rule is used for the assembling of the mass matrices $\Me$, $\Mpi$, and $\Mdi$, which leads to a diagonal structure, and the matrix $\Mh$ is diagonal automatically. 
In Figure~\ref{fig:wave}, we display the magnetic field component $h_y$ for the two schemes presented in Section~\ref{sec:2} and \ref{sec:3} for some selected time steps. 
As predicted, the numerical solutions cannot be distinguished by visual inspection; in our computations, the maximal difference was in the order of $10^{-12}$ and thus much smaller than the discretization errors. 
In our computations we tested both, the classical CQ and the FOCQ approach, leading to almost identical results. The latter was substantially faster, in particular for a large numbers of time steps.
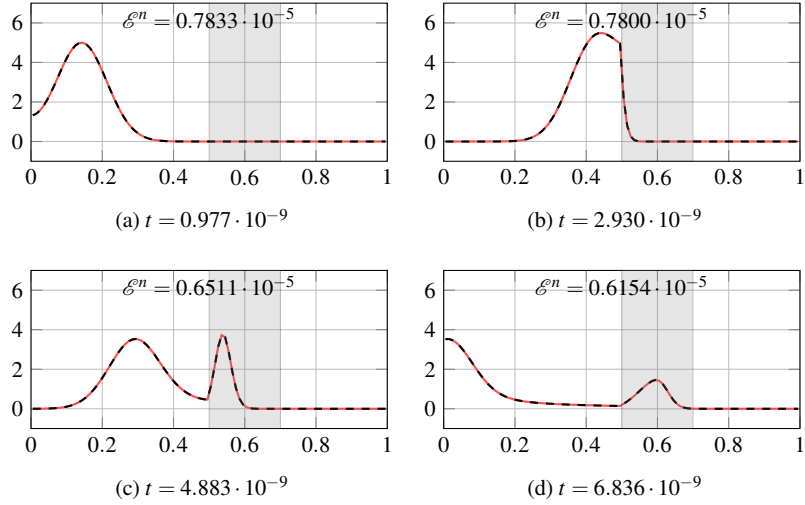
\begin{figure}[ht]
\centering
\begin{subfigure}{0.45\textwidth}
\centering
\begin{tikzpicture}
\begin{axis}[width=1.2\textwidth, height = 0.7\textwidth,ymin=-1,ymax=7,xmin=0,xmax=1,grid=both]
\addplot[mark=none, red!70, thick] table [y=y100,x=x, col sep=comma]{leapfrog_4.txt};
\addplot[mark=none, black, dashed, thick] table [y=y100,x=x, col sep=comma]{conv_4.txt};
\draw[mark=none, fill = gray, opacity=0.2] (axis cs:0.5,-1) rectangle (axis cs:0.7,8);
\draw (50,725) node { $\mathcal{E}^n = 0.7833 \cdot 10^{-5}$};
\end{axis}
\end{tikzpicture}
\caption{$t = 0.977\cdot 10^{-9}$ }
\label{fig:wave1}
\end{subfigure}
~
\begin{subfigure}{0.45\textwidth}
\centering
\begin{tikzpicture}
\begin{axis}[width=1.2\textwidth, height = 0.7\textwidth,ymin=-1,ymax=7,xmin=0,xmax=1,grid=both]
\addplot[mark=none, red!70, thick] table [y=y300,x=x, col sep=comma]{leapfrog_4.txt};
\addplot[mark=none, black, dashed, thick] table [y=y300,x=x, col sep=comma]{conv_4.txt};
\draw[mark=none, fill = gray, opacity=0.2] (axis cs:0.5,-1) rectangle (axis cs:0.7,8);
\draw (50,725) node {$\mathcal{E}^n = 0.7800 \cdot 10^{-5}$};
\end{axis}
\end{tikzpicture}
\caption{$t = 2.930\cdot 10^{-9}$ }
\label{fig:wave2}
\end{subfigure}

\bigskip

\begin{subfigure}{0.45\textwidth}
\centering
\begin{tikzpicture}
\begin{axis}[width=1.2\textwidth, height = 0.7\textwidth,ymin=-1,ymax=7,xmin=0,xmax=1,grid=both]
\addplot[mark=none, red!70, thick] table [y=y500,x=x, col sep=comma]{leapfrog_4.txt};
\addplot[mark=none, black, dashed, thick] table [y=y500,x=x, col sep=comma]{conv_4.txt};
\draw[mark=none, fill = gray, opacity=0.2] (axis cs:0.5,-1) rectangle (axis cs:0.7,8);
\draw (50,725) node { $\mathcal{E}^n = 0.6511 \cdot 10^{-5}$};
\end{axis}
\end{tikzpicture}
\caption{$t = 4.883\cdot 10^{-9}$}
\label{fig:wave3}
\end{subfigure}
~
\begin{subfigure}{0.45\textwidth}
\centering
\begin{tikzpicture}
\begin{axis}[width=1.2\textwidth, height = 0.7\textwidth,ymin=-1,ymax=7,xmin=0,xmax=1,grid=both]
\addplot[mark=none, red!70, thick] table [y=y700,x=x, col sep=comma]{leapfrog_4.txt};
\addplot[mark=none, black, dashed, thick] table [y=y700,x=x, col sep=comma]{conv_4.txt};
\draw[mark=none, fill = gray, opacity=0.2] (axis cs:0.5,-1) rectangle (axis cs:0.7,8);
\draw (50,725) node { $\mathcal{E}^n = 0.6154 \cdot 10^{-5}$};
\end{axis}
\end{tikzpicture}
\caption{$t = 6.836\cdot 10^{-9}$}
\label{fig:wave4}
\end{subfigure}
\caption{Snapshots of the component $h_y$ of the numerical solution restricted to the interval $[0,1]$ at different time steps. The solution of the leapfrog method \eqref{eq:disc1}--\eqref{eq:disc3} is drawn in red while that of the convolution-quadrature method \eqref{eq:disc1}--\eqref{eq:disc2} and \eqref{eq:convsum} is depicted in black. The gray area indicates the location of the dispersive medium.}
\label{fig:wave} 
\end{figure}

\vspace*{-1em}

From the results in Figure~\ref{fig:wave}, one can also recognize the basic physical behavior: In the initial phase, the pulse propagates through air and the total energy of the system is conserved exactly. When impinging on the air-tissue interface, a part of the pulse gets reflected and the rest penetrates into the dispersive medium. Propagation in the medium is substantially slower and, moreover, energy is dissipated according to Lemma~\ref{lem:1}.
We were able to reproduce this energy balance up machine precision. 

\vspace*{-1em}

\section{Summary} \label{sec:5}

We presented two discretization strategies for simulating Maxwell's equation in dispersive media, which were proven to be equivalent for certain classes of problems and to comply with the underlying energy--dissipation structure of the problem. 
The second scheme, which is based on a convolution quadrature approach, is independent of the number of internal states or relaxation times, and can be applied to dispersive media with rather general memory kernels. 
This might become particularly useful also in the context uncertainty quantification.

\vspace*{-1em}

\section*{Acknowledgements}
The authors are grateful for support by the German Research Foundation (DFG) via grants TRR~146 project C03, TRR~154, project C04, and Eg-331/1-1 and through grant Center for Computational Engineering at TU Darmstadt.

\vspace*{-1em}

\end{document}